\newtheorem{dfn}{Definition}
\newtheorem{thm}{Theorem}
\newtheorem{prp}{Proposition}
\newtheorem{rem}{Remark}
\newtheorem{asm}{Assumption}
\newcommand{\Dbar}{\overline D}
\newcommand{\real}{\mathbb{R}} 
\newcommand{\I}{\mathcal{I}}
\newcommand{\Q}{\mathcal{Q}}
\newcommand{\sfp}{\mathsf{p}}
\newcommand{\diag}{\mathrm{diag}}
\newcommand{\Ell}{\mathcal{L}}
\newcommand{\bfe}{\mathbf{e}}
\begin{document}
To appear in \emph{IEEE Transactions on Automatic Control}

\begin{minipage}{\textwidth}

\title{Stability of Fluid Queueing Systems with Parallel Servers and Stochastic Capacities}

\author{Li~Jin
        and Saurabh~Amin
\thanks{L. Jin and S. Amin are with the Department
of Civil and Environmental Engineering, Massachusetts Institute of Technology, Cambridge, MA 02139 USA; e-mails: \{jnl,amins\}@mit.edu.}
\vspace{-0.6cm}
}

   \maketitle
\end{minipage}

\begin{abstract}
This note introduces a piecewise-deterministic queueing (PDQ) model to study the stability of traffic queues in parallel-link transportation systems facing stochastic capacity fluctuations. The saturation rate (capacity) of the PDQ model switches between a finite set of modes according to a Markov chain, and link inflows are controlled by a state-feedback policy. A PDQ system is stable only if a lower bound on the time-average link inflows does not exceed the corresponding time-average saturation rate. Furthermore, a PDQ system is stable if the following two conditions hold: the nominal mode's saturation rate is high enough that all queues vanish in this mode, and a bilinear matrix inequality (BMI) involving an underestimate of the discharge rates of the PDQ in individual modes is feasible. The stability conditions can be strengthened for two-mode PDQs. These results can be used for design of routing policies that guarantee stability of traffic queues under stochastic capacity fluctuations.   
\end{abstract}

\textbf{Index terms}:
Traffic control, queueing systems, stability analysis, stochastic switching systems.

\section{Introduction}

Capacity fluctuations in transportation systems can cause significant efficiency losses to the system operators~\cite{kwon06}. In practice, these fluctuations can be frequent and also hard to predict deterministically~\cite{peterson95,jin17}. Thus, traffic control strategies that assume fixed (or nominal) link capacities may fail to limit the inefficiencies resulting from capacity fluctuations, especially when their intensity and/or frequency is non-negligible. In this note, we introduce a simple fluid queueing model with parallel links (servers) that accounts for stochastically varying capacities of individual links, and investigate its stability under a class of feedback control policies. Our analysis is based on known results on stability of continuous-time Markov processes \cite{meyn93,cloez15}, and properties of piecewise-deterministic Markov processes (PDMPs) \cite{davis84,benaim15}.

Current literature on control of transportation systems with uncertain link or node capacities either assumes a static (but uncertain) capacity model, or considers time-varying capacities~\cite{peterson95, anick82, como13i}. In the former class of models, the actual capacity is assumed to lie in a known set~\cite{como13i}, or is realized according to a given probability distribution~\cite{glockner00}. Such models are useful for evaluating the system's performance against worst-case disturbances. The latter class of models is motivated by situations where the capacity is inherently dynamic. These models can enable more accurate assessment of system performance in comparison to static models. In contrast to the above two classes, the model considered in this note is applicable to situations where the capacity can be modeled as a Markovian process~\cite{peterson95,baykal09,jin14}.

Since we focus on the behavior of aggregate traffic flows, fluid queueing models are better suited to our objectives than the conventional queueing models (e.g. $M/M/1$) \cite{peterson95,newell13}. Single server fluid queueing systems with stochastically switching saturation rates have been studied previously; see~\cite{anick82,chen92,kulkarni97}. This line of work focuses on the analysis of the stationary distribution of queue length under a fixed inflow or an open-loop control policy. Some results are also available on feedback-controlled fluid queueing systems with stochastic capacities~\cite{peterson95,yu04}. However, to the best of our knowledge, stability of parallel-link fluid queueing systems with uncertain capacities has not been considered before.  

In Section~\ref{Sec_Model}, we introduce the parallel-link fluid queueing model with a stochastically switching saturation rate vector. This model is called the \emph{piecewise-deterministic queueing} (PDQ) model, since the saturation rate vector switches between a finite set of values, or {modes}, according to a Markov chain, while the evolution of queue lengths between mode switches is deterministic. Thus, our model belongs to the class of PDMPs~\cite{davis84}. An advantage of this model is that it can be easily calibrated using commonly available traffic data~\cite{jin17}. Furthermore, since the capacity and the queue lengths can be obtained using modern sensing technologies, this model can be used to design capacity-aware control policies. 

Our stability notion follows \cite{meyn93,benaim15} in that a PDQ system is {stable} if the joint distribution of its state (mode and queue lengths) converges to a unique {invariant probability measure}. Our analysis involves two assumptions: (i) the mode transition process is ergodic, and (ii) the feedback control policy is bounded and continuous in the queue lengths, and also satisfies a monotonicity condition to ensure that more traffic is routed through links with smaller queues. 

Under the above-mentioned assumptions, in Section~\ref{Sec_PDQ}, we derive a necessary condition (Theorem~\ref{Thm_PDQ1}) and a sufficient condition (Theorem~\ref{Thm_PDQ2}) for stability. The necessary condition is that, for every queue, a suitably defined lower bound on the time-average inflow does not exceed the corresponding link's time-average saturation rate. The sufficiency result requires two conditions: (i) all queues eventually vanish in a ``nominal'' mode; and (ii) a lower bound on the discharge rate of the system in individual modes verify a bilinear matrix inequality (BMI). Condition (i) essentially ensures the uniqueness of the invariant measure, and Condition (ii) sets a lower bound on the total flow discharged from the system. Theorem~\ref{Thm_PDQ2} also provides an exponential convergence rate towards the invariant measure. The sufficient conditions for stability can be verified in a more straightforward manner in the case when the PDQ system has two modes~(Proposition~\ref{prp_bpdq}). Furthermore, under a mode-responsive control policy the necessary and sufficient conditions for a two-mode PDQ coincide~(Proposition~\ref{prp_bimod}).

Finally, in Section~\ref{Sec_Net}, we illustrate some applications of our results for designing stabilizing traffic routing policies in parallel-link networks with stochastic capacity fluctuations.
\section{Piecewise-Deterministic Queueing System}
\label{Sec_Model}

Consider the PDQ system in Figure~\ref{Fig_PDQ} (left).
A constant \emph{demand} $A\ge0$ of traffic arrives at the system and is allocated to $n$ parallel servers.
The \emph{inflow} vector $F(t)=[F_1(t),\ldots,F_n(t)]^T\in\real_{\ge0}^n$ is such that $\sum_{k=1}^nF_k(t)=A$ for all $t\ge0$.
Traffic can be temporarily stored in queueing buffers and discharged downstream. We denote the vector of \emph{queue lengths} by $Q(t)=[Q_1(t),\ldots,Q_n(t)]^T$.
Let $U(t)=[U_1(t),\ldots,U_n(t)]^T$ denote the vector of stochastic \emph{saturation rates}, where $U_k(t)$ is the maximum rate at which the $k$-th server can release traffic at time $t$.

\begin{figure}[hbt]
\centering
\includegraphics[width=0.45\textwidth]{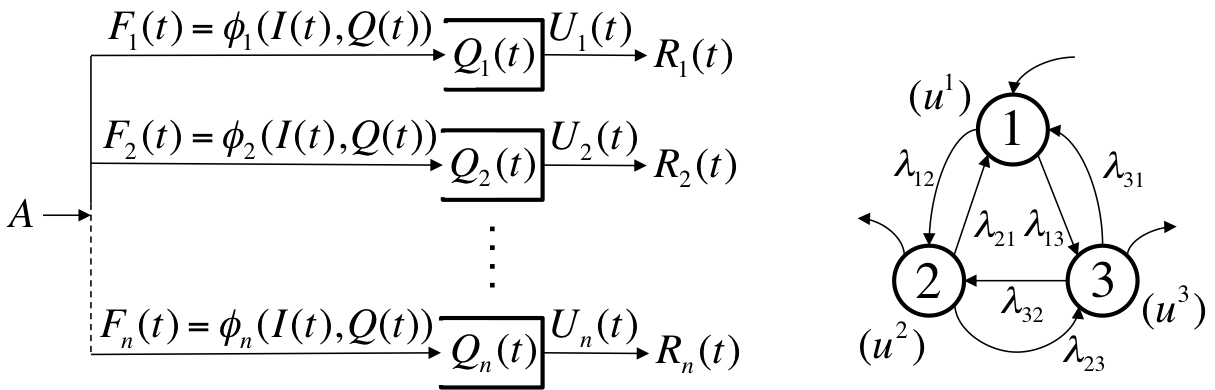}
\caption{Illustration of a PDQ system with $n$ parallel servers (left) and the mode transition process (right).}
\label{Fig_PDQ}
\end{figure}

For the $k$-th server, if $Q_k(t)=0$ and $F_k(t)\le U_k(t)$, the \emph{discharge rate} $R_k(t)$, i.e. the rate at which traffic departs from the system through the $k$-th server, is given by $R_k(t)=F_k(t)$; otherwise $R_k(t)=U_k(t)$.
We assume infinite buffer sizes; i.e. $Q(t)$ can take value in the set $\Q:=\real_{\ge0}^n$. This assumption enables us to account for all traffic arriving at the system and not just the traffic that is ultimately discharged by the system.

In our model, the saturation rates of the $n$ servers stochastically switch between a finite set of values.
To model this switching process, we introduce the set of \emph{modes} $\I$ of the PDQ system and let $m=|\I|$. We denote the mode of the PDQ system at time $t$ by $I(t)$. Each mode $i\in\I$ is associated with a {fixed} saturation rate, denoted by $u^i=[u_1^i,\ldots,u_n^i]^T$, which is distinct for each mode.
The evolution of $I(t)$ is governed by a finite-state Markov process with state space $\I$ and constant transition rates $\{\lambda_{ij};i,j\in\I\}$.
{\color{black}We assume that $\lambda_{ii}=0$ for all $i\in\I$.}
Note that this is without loss of generality, since self-transitions do not change the saturation rate; thus, including them will not affect the PDQ dynamics.
Let
\begin{align}
\nu_i:=\sum_{j\in\I}\lambda_{ij},
\label{Eq_nui}
\end{align}
which is the rate at which the system leaves mode $i$.
Given a fixed initial mode $I_0\in\I$ at $t=T_0:=0$, let $\{T_z;z=1,2,\ldots\}$ be the \emph{epochs} at which the mode transitions occur. Let $I_{z-1}$ be the mode during $[T_{z-1},T_z)$ and $S_z:=T_z-T_{z-1}$. Then, $S_z$ follows an exponential distribution with the cumulative distribution function (CDF):
\begin{align}
\mathsf F_{S_z}(s)=1-e^{-\nu_{I_{z-1}}s},\quad z=1,2\ldots
\label{Eq_Exp}
\end{align}
One can capture the transition rates in the $m\times m$ matrix:
\begin{align}
\Lambda:=\left[\begin{array}{llll}
-\nu_1 & \lambda_{12} &\ldots & \lambda_{1m}\\
\lambda_{21} & -\nu_2 &\ldots & \lambda_{2m}\\
\vdots & \vdots & \ddots & \vdots\\
\lambda_{m1} & \lambda_{m2} &\ldots & -\nu_m\\
\end{array}\right].
\label{Eq_Lmd}
\end{align}
{\color{black} We assume the following about the mode transition process:
\begin{asm}
The Markov process $\{I(t);t\ge0\}$ is \emph{ergodic}.
\label{Asm_Lambda}
\end{asm}
This assumption ensures that the process $\{I(t);t\ge0\}$ converges to a unique steady-state distribution, i.e. a row vector $\sfp=[\sfp_1,\ldots,\sfp_m]$ satisfying the following:
\begin{align}
\sfp\Lambda=0,\;|\sfp|=1,\;\sfp\ge0,
\label{Eq_pLmd}
\end{align}
where $|\cdot|$ is the $1$-norm.}

We consider that the demand $A$ is distributed across the $n$ servers according to a state-feedback \emph{routing policy}, which we denote as $\phi:\I\times\Q\to\real_{\ge0}^n$.
A routing policy is \emph{admissible} if $|\phi(i,q)|=A$ for all $(i,q)\in\I\times\Q$.
For a given routing policy $\phi$, the vector of discharge rates $R(t)$ is specified by the vector-valued function $r^\phi:\I\times\Q\to\real_{\ge0}^n$ with following components:
\begin{align}
&r^{\phi}_k(i,q):=\left\{\begin{array}{ll}
\phi_k(i,q), & q=0,\; \phi_k(i,q)\le u_k^i,\\
u_k^i, & \mbox{o.w.}
\end{array}\right.\nonumber\\
&\hspace{2in}k\in\{1,\ldots,n\};
\label{eq_r}
\end{align}
i.e., for each $t\ge0$, we have $R_k(t)=r^\phi_k(I(t),Q(t))\le U_k(t)$.

Let us define a vector field $D^\phi:\I\times\Q\to\real^n$ as follows:
\begin{align}
D^\phi(i,q):=\phi(i,q)-r^\phi(i,q).
\label{eq_D}
\end{align}
Then, the evolution of the \emph{hybrid state} $(I(t),Q(t))$ of the PDQ system is specified by the following dynamics:
\begin{subequations}
\begin{align}
&I(0)=i,\;Q(0)=q,\quad (i,q)\in\I\times\Q,\label{Eq_q0}\\
&\Pr\{I(t+\delta)=j'|I(t)=j\}=\lambda_{jj'}\delta+\mathrm o(\delta),\;j'\neq j,\\
&\frac{dQ(t)}{dt}=D^\phi\Big(I(t),Q(t)\Big),
\label{eq_Qdot}
\end{align}
\end{subequations}
where $\delta$ is an infinitesimal time increment.
Henceforth, we consider routing policies that satisfy the following assumption:
\begin{asm}
The routing policy $\phi(i,q)=[\phi_1(i,q),\ldots,\phi_n(i,q)]^T$ is bounded and continuous in $q$. 
Furthermore, for $k\in\{1,\ldots,n\}$, $\phi_k$ is non-increasing in $q_k$, and non-decreasing in $q_h$ for $h\neq k$.
\label{Asm_phi}
\end{asm}

The assumption of boundedness and continuity ensures that the Markov process $\{(I(t),Q(t));t\ge0\}$ is right continuous with left limits (RCLL, or \emph{c\`adl\`ag}) \cite{davis84}.
Furthermore, since $Q(t)$ is not reset after mode transitions, $Q(t)$ is necessarily continuous in $t$.
With the RCLL property, following \cite[Theorem 5.5]{davis84}, the \emph{infinitesimal generator} $\Ell^\phi$ of a PDQ with an admissible routing policy $\phi$ satisfying Assumption~\ref{Asm_phi} is given by
\begin{align}
&\Ell^\phi g(i,q)=\left(D^\phi(i,q)\right)^T\nabla_qg(i,q)\nonumber\\
&\quad+\sum_{j\in\I}\lambda_{ij}\Big( g(j,q)-g(i,q)\Big),
\quad(i,q)\in\I\times\Q,
\label{Eq_Lg}
\end{align}
where $g$ is any function on $\I\times\Q$ smooth in the continuous argument.

The assumption of monotonicity of controlled inflows with respect to queue lengths is practically relevant: more traffic is allocated to servers with smaller queues.
In addition, this assumption ensures the existence of the following limits:
\begin{align}
\varphi_{kh}^i:=\lim_{q_h\to\infty}\phi_k(i,q_h\bfe_h),\quad
h,k\in\{1,\ldots,n\},\;i\in\I,
\label{eq_varphi}
\end{align}
where $\bfe_h$ is the $n$-dimensional vector such that the $h$-th element is 1 and the others are 0.
Particularly, the monotonicity of $\phi$ also implies that $\phi_k(i,q)\ge\varphi_{kk}^i$ for all $k\in\{1,\ldots,n\}$ and all $(i,q)\in\I\times\Q$.

Many practically relevant routing policies satisfy Assumption~\ref{Asm_phi}. Examples include: 
\begin{enumerate}
	\item \emph{Mode-responsive} routing policy: 
\begin{align}
\phi_k^{\mathrm{mod}}(i)=\sum_{j\in\I}\mathbb1_{\{j=i\}}\psi_k^j,\quad
k\in\{1,\ldots,n\},
\label{eq_phimod}
\end{align}
where $\mathbb1_{\{\cdot\}}$ is the indicator function, and $\psi_k^i\ge0$ for $k\in\{1,\ldots,n\}$ and $i\in\I$.
This policy can be viewed as simple re-direction of traffic during disruptions.
	\item \emph{Piecewise-affine} routing policy:
\begin{align}
&\phi_k^{\mathrm{pwa}}(q)=\min\Big\{A,\Big(\theta_k-\alpha_{kk}q_k+\sum_{h\neq k}\alpha_{kh}q_h\Big)_+\Big\},\nonumber\\
&\hspace{1.75in} k\in\{1,\ldots,n\},
\label{eq_phipwa}
\end{align}
where $\alpha_{kh}\ge0$ for all $k,h\in\{1,\ldots,n\}$ and $(\cdot)_+$ indicates the positive part.
This policy is an example of a \emph{queue-responsive} traffic control policy.
Note that $\theta_k$ can be interpreted as the ``nominal'' inflow sent to each server when no queue exists throughout the system, and the linear terms $\alpha_{kh}q_h$ as adjustment to these inflows that accounts for the queue lengths. 
	\item \emph{Logit} routing policy:
\begin{align}
\phi_k^{\mathrm{log}}(q)=\frac{A\exp(\gamma_k-\beta_kq_k)}{\sum_{h=1}^n\exp(\gamma_h-\beta_hq_h)},\;k\in\{1,\ldots,n\},
\label{eq_philog}
\end{align}
where $\beta_k\ge0$ for $k\in\{1,\ldots,n\}$.
This is a classical model of travelers' route choice.
One can interpret $\beta_k$ as sensitivity parameter that reflects travelers' preference to the queue length in the $k$-th server, and $\gamma_k$ the parameter governing travelers' preference when every server has a zero queue.
\end{enumerate}
Note that the computation of the limiting inflows $\varphi_{kh}^i$ is rather straightforward for the above-mentioned routing policies (see Section~\ref{Sec_Net}).

Next, we introduce the notion of stability.
The \emph{transition kernel} \cite{meyn93} of a PDQ at time $t\ge0$ is a map $P_t$ from $\I\times\Q$ to the set of probability measures on $\I\times\Q$. Essentially, for an initial condition $(i,q)\in\I\times\Q$ and a measurable set $\mathcal E\subseteq\I\times\Q$, we have
\begin{align*}
P_t((i,q);\mathcal E)=\Pr\{(I(t),Q(t))\in \mathcal E|I(0)=i,Q(0)=q\}.
\end{align*}
One can also consider $P_t$ as an operator acting on probability measures $\mu$ on $\I\times\Q$ via
\begin{align}
\mu P_t(\mathcal E)=\int_{\I\times\Q}P_t((i,q);\mathcal E)d\mu.
\label{Eq_muPt}
\end{align}
An \emph{invariant probability measure} \cite{meyn93} of a PDQ system with routing policy $\phi$ is a probability measure $\mu_\phi$ such that
\begin{align*}
\mu_\phi P_t=\mu_\phi,\quad \forall t\ge0.
\end{align*}

\begin{dfn}[Stability \cite{cloez15,benaim15}]
The PDQ system with routing policy $\phi$ is \emph{stable} if there exists a probability measure $\mu_\phi$ on $\I\times\Q$ such that, for each initial condition $(i,q)\in\I\times\Q$, 
\begin{align}
\lim_{t\to\infty}\|P_t((i,q);\cdot)-\mu_\phi(\cdot)\|_{\mathrm{TV}}=0,\;
\forall(i,q)\in\I\times\Q,
\label{Eq_Stable}
\end{align}
where $\|\cdot\|_{\mathrm{TV}}$ is the total variation distance.
Furthermore, the PDQ system is \emph{exponentially stable} if it is stable and there exist constants $B>0$ and $c>0$, and a norm-like function\footnote{Following  \cite{meyn93}, $W$ is norm-like if $W(i,q)\to\infty$ as $\|q\|\to\infty$ for $i\in\I$.} $W:\I\times\Q\to[1,\infty)$ such that, for any $(i,q)\in\I\times\Q$,
\begin{align}
\|P_t((i,q);\cdot)-\mu_\phi(\cdot)\|_{\mathrm{TV}}\le BW(i,q)e^{-ct},\;\forall t\ge0.
\label{eq_exp}
\end{align}
\end{dfn}

Finally, the PDQ system is said to be \emph{unstable} if \eqref{Eq_Stable} does not hold.
\section{Stability of Feedback-Controlled PDQs}
\label{Sec_PDQ}

In this section, we study the stability of controlled PDQ systems.
Our main results are Theorem~\ref{Thm_PDQ1} (a necessary condition for stability) and Theorem~\ref{Thm_PDQ2} (a sufficient condition for stability).

\begin{thm}
\label{Thm_PDQ1}
Suppose that a PDQ system with $n$ parallel servers is subject to a total demand $A\in\real_{\ge0}$ and is controlled by an admissible policy $\phi$. 
If the PDQ system is stable, then
\begin{align}
\sum_{i\in\I}\sfp_i\varphi_{kk}^i
\le\sum_{i\in\I}\sfp_iu_k^i,\;
k\in\{1,\ldots,n\},
\label{eq_stable1}
\end{align}
where $\sfp_i$ are given by \eqref{Eq_pLmd} and $\varphi_{kk}^i$ are given by \eqref{eq_varphi}.
\end{thm}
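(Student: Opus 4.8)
The plan is to convert stability into a statement about the long-run growth rate of each queue and bound that rate from below using the two structural inequalities $r^\phi_k\le u_k^i$ and $\phi_k\ge\varphi_{kk}^i$. First I would integrate the queue dynamics \eqref{eq_Qdot} over $[0,T]$, using that $Q_k$ is continuous in $t$, to obtain for each server $k$
\begin{align*}
Q_k(T)-Q_k(0)=\int_0^T\Big(\phi_k(I(t),Q(t))-r^\phi_k(I(t),Q(t))\Big)\,dt.
\end{align*}
Since $r^\phi_k(i,q)\le u_k^i$ by \eqref{eq_r} and $\phi_k(i,q)\ge\varphi_{kk}^i$ by the monotonicity of $\phi$, the integrand is bounded below by the mode-dependent quantity $\varphi_{kk}^{I(t)}-u_k^{I(t)}$, so that
\begin{align*}
\frac{Q_k(T)-Q_k(0)}{T}\ge\frac1T\int_0^T\Big(\varphi_{kk}^{I(t)}-u_k^{I(t)}\Big)\,dt.
\end{align*}

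Next I would treat the two sides separately. For the right-hand side, since $\{I(t);t\ge0\}$ is a finite-state ergodic Markov process (Assumption~\ref{Asm_Lambda}), the ergodic theorem for continuous-time Markov chains gives, almost surely,
\begin{align*}
\frac1T\int_0^T\Big(\varphi_{kk}^{I(t)}-u_k^{I(t)}\Big)\,dt\xrightarrow[T\to\infty]{}\sum_{i\in\I}\sfp_i\Big(\varphi_{kk}^i-u_k^i\Big),
\end{align*}
with $\sfp$ the stationary distribution from \eqref{Eq_pLmd}. For the left-hand side, I would argue that stability forces the normalized queue to vanish: by \eqref{Eq_Stable} the total-variation convergence of the joint law implies that $Q_k(T)$ converges in distribution to the (tight) marginal of $\mu_\phi$ on $\real_{\ge0}$, whence $Q_k(T)/T\to0$ in probability, while $Q_k(0)/T\to0$ deterministically.

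Finally, I would close the argument by contradiction. Suppose $\sum_{i\in\I}\sfp_i\varphi_{kk}^i>\sum_{i\in\I}\sfp_iu_k^i$, so that the almost-sure limit $\Delta:=\sum_{i\in\I}\sfp_i(\varphi_{kk}^i-u_k^i)$ is strictly positive. Combining the lower bound on $(Q_k(T)-Q_k(0))/T$ with the ergodic limit yields $\liminf_{T\to\infty}Q_k(T)/T\ge\Delta>0$ almost surely. On the other hand, convergence of $Q_k(T)/T$ to $0$ in probability produces a subsequence along which $Q_k(T)/T\to0$ almost surely, contradicting the strictly positive $\liminf$. Hence $\Delta\le0$, which is precisely \eqref{eq_stable1}. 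I expect the delicate step to be the left-hand side, namely rigorously deducing $Q_k(T)/T\to0$ from \eqref{Eq_Stable}: the key is that the limiting marginal is a genuine probability measure on $\real_{\ge0}$ and is therefore tight, so tightness together with convergence in distribution at continuity points bounds $\Pr\{Q_k(T)>\epsilon T\}$ uniformly for large $T$. By contrast, the ergodic-theorem step is standard for finite-state chains and the two structural bounds are immediate from the definitions; notably, this route never requires integrability of $q_k$ under $\mu_\phi$, which would be the main obstacle in a generator/Dynkin-based alternative applying $\Ell^\phi$ to $g(i,q)=q_k$.
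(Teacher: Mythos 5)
Your proposal is correct, and its overall skeleton matches the paper's proof: integrate the dynamics \eqref{eq_Qdot}, lower-bound the integrand by $\varphi_{kk}^{I(t)}-u_k^{I(t)}$ using the monotonicity consequence $\phi_k(i,q)\ge\varphi_{kk}^i$ and the saturation bound $r^\phi_k(i,q)\le u_k^i$ from \eqref{eq_r}, and pass to the limit with the ergodic theorem for the finite-state mode chain. Where you genuinely diverge is the one step where stability is actually used, namely showing that the queue grows sublinearly. The paper deduces from stability the sample-path property of \emph{non-evanescence}, $\Pr\{\lim_{t\to\infty}Q(t)=\infty\}=0$ (citing Meyn--Tweedie), asserts from it that $\lim_{t\to\infty}Q_k(t)/t=0$ almost surely, concludes that the time average of $\phi_k-r^\phi_k$ vanishes a.s., and reads off \eqref{eq_stable1} directly rather than by contradiction. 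You instead work from the definition \eqref{Eq_Stable} itself: total-variation convergence makes $Q_k(T)$ converge in distribution to the tight marginal of $\mu_\phi$, hence $Q_k(T)/T\to0$ in probability, and a Borel--Cantelli subsequence extraction turns this into an a.s.\ limit along a subsequence, contradicting a strictly positive ergodic drift. Your route is more self-contained (it uses only elementary probability plus the stated definition of stability) and is arguably tighter at the delicate point: strictly speaking, non-evanescence yields only $\liminf_{t\to\infty}Q_k(t)/t=0$ a.s.\ (the path re-enters bounded sets along a random subsequence), not the full limit the paper asserts, though that liminf already suffices for the paper's argument. What the paper's route buys is brevity, by importing the almost-sure statement wholesale from the Meyn--Tweedie machinery. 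Your closing remark is also apt: neither route requires integrability of $q_k$ under $\mu_\phi$, which is precisely what would obstruct a Dynkin/generator argument applying $\Ell^\phi$ to $g(i,q)=q_k$.
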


\begin{proof}

Suppose that the PDQ system is stable.

For each server $k\in\{1,\ldots,n\}$ and for each initial condition $(i,q)\in\I\times\Q$, we obtain from \eqref{eq_D} and \eqref{eq_Qdot} that, for all $t\ge0$,
\begin{align*}
Q_k(t)=\int_{0}^t\left(\phi_k(I({s}),Q({s}))-r_k^\phi(I({s}),Q({s}))\right)d{s}+q_k.
\end{align*}
Since $\lim_{t\to\infty}q_k/t=0$, we have
\begin{align}
0&=\lim_{t\to\infty}\frac{1}{t}\Bigg(\int_{0}^t\left(\phi_k(I({s}),Q({s}))-r_k^\phi(I({s}),Q({s}))\right)d{s}\nonumber\\
&\hspace{.75in}+q_k-Q_k(t)\Bigg)\nonumber\\
&=\lim_{t\to\infty}\frac{1}{t}\Bigg(\int_{0}^t\Big(\phi_k(I({s}),Q({s}))-r_k^\phi(I({s}),Q({s}))\Big)d{s}\nonumber\\
&\hspace{.75in}-Q_k(t)\Bigg).
\label{Eq_limphi}
\end{align}

Since the $k$-th queue is stable, for each initial condition $(i,q)\in\I\times\Q$, $\Pr\{\lim_{t\to\infty}Q(t)=\infty\}=0$ (i.e. \emph{non-evanescence}, see \cite[pp. 524]{meyn93} for details), and we have $\lim_{t\to\infty}Q_k(t)/t=0$ a.s.
Hence, we can rewrite \eqref{Eq_limphi} as
\begin{align*}
\lim_{t\to\infty}\frac{1}{t}\int_{0}^t\Big(\phi_k(I({s}),Q({s}))-r_k^\phi(I({s}),Q({s}))\Big)d{s}=0,\;a.s.
\end{align*}
Now we can make two observations.
First, by Assumption~\ref{Asm_phi} (monotonicity), we have 
\begin{align*}
\phi_k(I({s}),Q({s}))
&\ge\phi_k(I({s}),Q_k({s})\bfe_k)\\
&\ge\lim_{q_k\to\infty}\phi_k(I({s}),q_k\bfe_k)=\varphi_{kk}^{I(s)},
\;\forall{s}\ge0.
\end{align*}
Secondly, recall that \eqref{eq_r} implies $r_k^\phi(I({s}),Q({s}))\le U_k({s})$ for ${s}\ge0$.
Thus, we have
\begin{align}
0&=\lim_{t\to\infty}\frac{1}{t}\int_{0}^t\Big(\phi_k(I({s}),Q({s}))-r_k^\phi(I({s}),Q({s}))\Big)d{s}\nonumber\\
&\ge\lim_{t\to\infty}\frac{1}{t}\int_{0}^t\Big(\varphi_{kk}^{I({s})}-U_k({s})\Big)d{s}.
\label{Eq_limphi2}
\end{align}

In addition, for every $i\in\I$, let $M_i(t)$ be the amount of time that the PDQ system is in mode $i$ up to time $t$, i.e.:
\begin{align*}
M_i(t)=\int_{0}^t\mathbb1_{\{I({s})=i\}}d{s}.
\end{align*}
Then, under Assumption~\ref{Asm_Lambda}, we have
\begin{align*}
\lim_{t\to\infty}\frac{M_i(t)}{t}=\sfp_i,\;a.s.\;\forall i\in\I.
\end{align*}
Hence,
\begin{align}
&\lim_{t\to\infty}\frac{1}{t}\int_{0}^t\varphi_{kk}^{I({s})}d{s}=\lim_{t\to\infty}\frac{1}{t}\int_{0}^t\Big(\sum_{i\in\I}\mathbb1_{\{I({s})=i\}}\varphi_{kk}^i\Big)d{s}\nonumber\\
&=\lim_{t\to\infty}\sum_{i\in\I}\frac{M_i(t)}{t}\varphi_{kk}^i
=\sum_{i\in\I}\sfp_i\varphi_{kk}^i,\;a.s.
\label{Eq_lim2}
\end{align}
Similarly, we can obtain
\begin{align}
\lim_{t\to\infty}\frac{1}{t}\int_{0}^tU_k({s})d{s}
=\sum_{i\in\I}\sfp_iu_k^i,\;a.s.
\label{Eq_lim3}
\end{align}

Combining \eqref{Eq_limphi2}--\eqref{Eq_lim3}, we obtain \eqref{eq_stable1}.
\end{proof}

Theorem~\ref{Thm_PDQ1} provides a way of identifying unstable control policies.
As argued in the proof, $\varphi_{kk}^i$ is in fact the lower bound for $\phi_k(i,q)$ for all $q\in\Q$.
Hence, Theorem~\ref{Thm_PDQ1} essentially states that if the PDQ system is stable, then the (time-average) lower bound of the inflow does not exceed the average saturation rate.

To introduce our next result, we define $R_\min=[R_\min^1,\ldots,R_\min^m]^T$ as follows:
\begin{align}
	R_\min^i=\min_{k}\Bigg(u_k^i+\sum_{\substack{h:h\neq k}}\min\{u_h^i,\varphi_{hk}^i\}\Bigg),\quad i\in\I.
	\label{eq_Rmin}
\end{align}
One can interpret $R_\min^i$ as a lower bound on the total discharge rate of the $n$ servers in mode $i$ when at least one of the $n$ servers has a non-zero queue.
Our next result uses $R_\min^i$ to provide a sufficient condition for the stability of feedback-controlled PDQ systems.

\begin{thm}
\label{Thm_PDQ2}
Suppose that a PDQ system of $n$ parallel servers is subject to a total demand $A\in\real_{\ge0}$ and is controlled by an admissible policy $\phi$.
Let the elements of the vector $R_\min$ be as defined in \eqref{eq_Rmin}.
Then, the PDQ system is stable if
\begin{align}
\exists i^*\in\I,\;\forall k\in\{1,\ldots,n\},\quad
	\phi_k(i^*,0)<u_k^{i^*},
	\label{eq_phi0<u}
\end{align}
and if
\begin{align}
&\exists a=[a^1,\ldots,a^m]^T\in\real_{>0}^m,\;\exists b>0,\nonumber\\
	&\hspace{1in}\Big(\diag(A\bfe-R_\min)b+\Lambda\Big)a\le-\bfe,
	\label{eq_BMI}
\end{align}
where $\bfe$ is the $m$-dimensional vector of 1's.
Furthermore, under the above conditions, there exists a positive constant $c=\min_{i\in\I}1/(2a^i)$ such that, for some $B>0$,
\begin{align}
&\|P_t((i,q);\cdot)-\mu_\phi(\cdot)\|_{\mathrm{TV}}\le B\Big(a^{i} e^{b|q|}+1\Big)e^{-ct},\nonumber\\
&\hspace{1.5in}\forall(i,q)\in\I\times\Q,\;\forall t\ge0,
\label{Eq_ect}
\end{align}
where $\mu_\phi$ is the unique invariant probability measure.
\end{thm}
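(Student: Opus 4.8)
The plan is to prove exponential ergodicity by a Foster--Lyapunov (geometric drift) argument together with a minorization/petiteness condition, and then invoke the standard $V$-uniform ergodic theorem for Markov processes and its PDMP versions (\cite{meyn93,benaim15,cloez15}). The form of the bound \eqref{Eq_ect} dictates the Lyapunov candidate $V(i,q)=a^i e^{b|q|}$, where $a,b$ come from the BMI \eqref{eq_BMI}; I would work with $V$ throughout and only at the end pass to the norm-like $W(i,q)=a^i e^{b|q|}+1\ge1$. Since $|q|=\sum_k q_k$ on $\Q$, the function $V$ is smooth in $q$ with $\nabla_q V=ba^i e^{b|q|}\bfe$, so \eqref{Eq_Lg} and $\sum_k D_k^\phi(i,q)=A-\sum_k r_k^\phi(i,q)$ give the closed form $\Ell^\phi V(i,q)=e^{b|q|}\big[ba^i\big(A-\sum_k r_k^\phi(i,q)\big)+(\Lambda a)_i\big]$, using $\sum_{j}\lambda_{ij}(a^j-a^i)=(\Lambda a)_i$.

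The first and most model-specific step is to lower bound the total discharge for large queues, namely $\liminf_{|q|\to\infty}\sum_k r_k^\phi(i,q)\ge R_{\min}^i$ for each $i\in\I$. Consider any direction in which a nonempty subset $\mathcal S$ of queues grows to infinity while the rest stay bounded. Each server $j\in\mathcal S$ eventually has $q_j>0$ and so discharges at saturation $u_j^i$ by \eqref{eq_r}. For an empty-queue server $h$, Assumption~\ref{Asm_phi} gives $\phi_h(i,q)\ge\phi_h(i,q_k\bfe_k)$ for any $k\in\mathcal S$ (setting the remaining coordinates to $0$ only decreases $\phi_h$), and letting $q_k\to\infty$ yields $\liminf\phi_h(i,q)\ge\varphi_{hk}^i$ by \eqref{eq_varphi}, hence $\liminf r_h^\phi(i,q)\ge\min\{u_h^i,\varphi_{hk}^i\}$. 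Summing and minimizing over the reference index $k\in\mathcal S$ reproduces \eqref{eq_Rmin}. I would flag that this bound is asymptotic rather than pointwise (for the piecewise-affine and logit policies $\sum_k r_k^\phi$ only approaches $R_{\min}^i$ as $|q|\to\infty$), which is exactly why only half of the BMI slack is consumed below.

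The second step converts this into a drift condition. Reading \eqref{eq_BMI} componentwise as $ba^i(A-R_{\min}^i)+(\Lambda a)_i\le-1$, the discharge bound supplies, for each $\epsilon>0$, a level $L$ such that $|q|\ge L$ implies $\Ell^\phi V(i,q)\le e^{b|q|}\big(-1+ba^i\epsilon\big)$. Choosing $\epsilon$ so small that $ba^i\epsilon\le\tfrac12$ for all $i$ gives $\Ell^\phi V(i,q)\le-\tfrac12 e^{b|q|}=-\tfrac{1}{2a^i}V(i,q)\le-cV(i,q)$ on the complement of the compact set $C:=\I\times\{q:|q|\le L\}$, with $c=\min_{i}1/(2a^i)$; since $\Ell^\phi V$ is bounded on $C$, this yields the global geometric drift $\Ell^\phi V\le-cV+d\,\mathbb1_C$ for some $d<\infty$.

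The third step, which I expect to be the main obstacle, establishes $\psi$-irreducibility, aperiodicity, and petiteness of $C$; the difficulty is that the continuous dynamics \eqref{eq_Qdot} are deterministic, so all smoothing must come from the mode switches. Here \eqref{eq_phi0<u} is essential: by continuity $\phi_k(i^*,q)<u_k^{i^*}$ on a neighborhood of $0$, so in mode $i^*$ the field $D^\phi(i^*,\cdot)$ drives $Q$ toward the attracting equilibrium $q=0$, and with Assumption~\ref{Asm_Lambda} the point $(i^*,0)$ is reachable from every initial condition. I would then build a minorization on a small ball around $(i^*,0)$: on the compact set $C$ the probability of forcing a fixed chain of transitions into mode $i^*$ is bounded below, the exponential holding times produce an absolutely continuous component in $Q$, and holding $i^*$ long enough carries $Q$ into the attracting region---the accessibility/Doeblin constructions of \cite{benaim15,cloez15}. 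This makes $C$ petite and, combined with the geometric drift, feeds the $V$-uniform ergodic theorem (\cite[Thm.~6.1]{meyn93} and its PDMP analogues) to deliver a unique invariant measure $\mu_\phi$ with $\|P_t((i,q);\cdot)-\mu_\phi\|_{\mathrm{TV}}\le B\,V(i,q)e^{-ct}$; replacing $V$ by $W=V+1$ gives exactly \eqref{Eq_ect}.
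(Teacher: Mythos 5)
Your drift analysis (steps one and two) is essentially identical to the paper's proof of its condition (B): the same Lyapunov function $V(i,q)=a^ie^{b|q|}$, the same asymptotic lower bound on the total discharge rate obtained from Assumption~\ref{Asm_phi} and the limits $\varphi_{hk}^i$, the same ``half-slack'' bookkeeping so that the error term eats only half of the BMI margin in \eqref{eq_BMI}, and the same constant $c=\min_{i\in\I}1/(2a^i)$. (The paper organizes the large-queue region as the complement of a box $\{\zeta:0\le\zeta\le L\}$ with per-coordinate thresholds $L_k$ rather than a sublevel set of $|q|$, but that difference is cosmetic.)

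Where you diverge from the paper---and where your sketch has a genuine soft spot---is the uniqueness/minorization step, which you flag as the main obstacle. You propose to obtain a Doeblin minorization from PDMP smoothing, asserting that ``the exponential holding times produce an absolutely continuous component in $Q$.'' That mechanism is neither necessary nor easy to justify: with $n$ queues and finitely many modes, the law of $Q(t)$ started from a point is supported on finite unions of lower-dimensional sets parametrized by the jump times, so producing an absolutely continuous component requires a nondegeneracy argument you have not supplied. The paper's resolution is far more elementary and exploits the boundary structure of the dynamics. Under \eqref{eq_phi0<u}, the total drift in mode $i^*$ is not merely negative but bounded away from zero uniformly in $q$: the chain of inequalities \eqref{eq_D<0} ends with the constant margin coming from $u_k^{i^*}-\phi_k(i^*,0)>0$. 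Hence the deterministic flow in mode $i^*$ hits the point $q=0$ \emph{exactly, in finite time} proportional to the initial queue length (not just asymptotically, as your phrase ``drives $Q$ toward the attracting equilibrium'' suggests). Combined with the linear a priori bound $|Q(X_1)|\le|q_0|+AX_1$ of \eqref{eq_Q<AX1} and the positive probability $e^{-\nu_{i^*}X_2}$ of holding mode $i^*$ long enough, this shows that from any initial condition the kernel $P_T$ places mass at least $\delta>0$ on the \emph{single state} $(i^*,0)$. In other words, $(i^*,0)$ is an accessible atom, the minorizing measure is simply the Dirac mass there, and the two-initial-condition total-variation condition \eqref{eq_TV} required by \cite[Theorem 6.1]{meyn93} (equivalently, the petiteness you are after) follows with no smoothing whatsoever. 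If you replace your third step by this observation, your argument closes and coincides with the paper's proof.
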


The proof of Theorem~\ref{Thm_PDQ2} is based on a more general result \cite[Theorem 6.1]{meyn93}, which we recall here in the setting of PDQ systems.
To conclude stability of the PDQ system, \cite[Theorem 6.1]{meyn93} requires that the following two conditions hold:
\begin{enumerate}[(A)]
\item For any two initial conditions $(i,q),(j,\ell)\in\I\times\Q$, there exist $\delta>0$ and $T>0$ such that
\begin{align}
\|P_T((i,q);\cdot)-P_T((j,\ell);\cdot)\|_{\mathrm{TV}}\le1-\delta.
\label{eq_TV}
\end{align}
\item There exist a norm-like function $V:\I\times\Q\to\real_{\ge0}$ (called the \emph{Lyapunov function}) and constants $c>0$ and $d<\infty$ such that
\begin{align}
\Ell V(i,q)\le-cV(i,q)+d,\quad\forall(i,q)\in\I\times\Q.
\label{Eq_Drift}
\end{align}
\end{enumerate}
Condition (A) is required for the uniqueness of the invariant probability measure \cite{dai95ii}.
Condition (B) is usually referred to as the \emph{drift condition}, which essentially ensures the existence of invariant probability measures \cite[Theorem 4.5]{meyn93}.

We are now ready to prove the theorem:

\begin{proof}[Proof of Theorem~\ref{Thm_PDQ2}]

Suppose that \eqref{eq_phi0<u} and \eqref{eq_BMI} hold.
We verify condition (A) (resp. (B)) using \eqref{eq_phi0<u} (resp. \eqref{eq_BMI}).

\noindent\underline{Condition (A)}:

Consider any initial condition $(i_0,q_0)\in\I\times\Q$.

First, Assumption~\ref{Asm_Lambda} ensures that the Markov process $\{I(t),Q(t);t\ge0\}$ recurrently visits the mode $i^*$. That is, for any $X_1>0$, there exists $\sigma>0$ such that
\begin{align}
\Pr\{I(X_1)=i^*|I(0)=i_0,Q(0)=q_0\}=\sigma.
\label{eq_delta}
\end{align}
Furthermore, we can obtain from \eqref{eq_Qdot} that
\begin{align}
|Q(X_1)|
&=\Big|q_0+\int_0^{X_1}\Big(A-\sum_{k=1}^nr_k^\phi(I({s}),Q({s}))\Big)d{s}\Big|\nonumber\\
&\le|q_0|+\int_0^{X_1}\Big|A-\sum_{k=1}^nr_k^\phi(I({s}),Q({s}))\Big|d{s}\nonumber\\
&\le|q_0|+AX_1.
\label{eq_Q<AX1}
\end{align}

Secondly, in mode $i^*$, the vector of queue length $Q(t)$ necessarily converges to $q^*=0$.
To see this, consider mode $i^*$ and any $q\in\Q$. For each $k\in\{1,\ldots,n\}$ such that $q_k=0$, by Assumption~\ref{Asm_phi}, we have
$\phi_k(i^*,q)\ge\phi_k(i^*,0)$, and thus
\begin{align}
r_k^\phi(i^*,q)&=\min\{u_k^{i^*},\phi_k(i^*,q)\}\nonumber\\
&\ge\min\{u_k^{i^*},\phi_k(i^*,0)\}
= r_k^\phi(i^*,0).
\label{eq_rq>r0}
\end{align}
Therefore, for each $q\in\Q\backslash\{0\}$, we have
\begin{align}
&\sum_{k=1}^nD^\phi_k(i^*,q)
\stackrel{\footnotesize\eqref{eq_D}}=A-\sum_{k=1}^nr_k^\phi(i^*,q)\nonumber\\
&\quad=A-\sum_{k:q_k>0}u_k^{i^*}-\sum_{k:q_k=0}r_k^\phi(i^*,q)\nonumber\\
&\quad\stackrel{\footnotesize\eqref{eq_rq>r0}}{\le} A-\sum_{k:q_k>0}u_k^{i^*}-\sum_{k:q_k=0}r_k^\phi(i^*,0)\nonumber\\
&\quad\le A-\min_{k:q_k>0}\Big(u_k^{i^*}+\sum_{\substack{h:q_h>0\\h\neq k}}r_h^\phi(i^*,0)\Big)-\sum_{k:q_k=0}r_k^\phi(i^*,0)\nonumber\\
&\quad{\le} A-\min_{k\in\{1,\ldots,n\}}\Big(u_k^{i^*}+\sum_{h\neq k}r_h^\phi(i^*,0)\Big)\nonumber\\
&\quad\stackrel{\footnotesize\eqref{eq_phi0<u}}<A-\sum_{k=1}^nr_k^\phi(i^*,0)
\stackrel{\footnotesize\eqref{eq_r}\eqref{eq_phi0<u}}{=}0.
\label{eq_D<0}
\end{align}
One can see from \eqref{eq_Q<AX1} and \eqref{eq_D<0} that there exists
\begin{align*}
X_2=\frac{|q_0|+AX_1}{A-\min_{k}\Big(u_k^{i^*}+\sum_{h\neq k}r_h^\phi(i^*,q)\Big)}
\end{align*}
such that $Q(X_1+X_2)=0$ if $I(t)=i^*$ for all $t\in[X_1,X_2+X_2)$. Note that
\begin{align*}
\Pr\{I(t)=i^*;t\in[X_1,X_1+X_2)|I(X_1)=i^*\}
=e^{-\nu_{i^*}X_2}.
\end{align*}

Thus, we have
\begin{align*}
\Pr\{Q( X_1+ X_2)=0|I(0)=i_0,Q(0)=q_0\}
\ge\sigma e^{-\nu_{i^*} X_2}>0,
\end{align*}
where $\sigma$ satisfies \eqref{eq_delta}.
Hence, we have
\begin{align*}
P_{X_1+X_2}((i_0,q_0),\{(i^*,0)\})\ge\sigma e^{-\nu_{i^*} X_2}.
\end{align*}
Then, for any $T\ge X_1+X_2$, we have
\begin{align*}
P_{T}((i_0,q_0),\{(i^*,0)\})\ge\sigma e^{-\nu_{i^*} (T-X_1)}.
\end{align*}

Thus, for arbitrary initial conditions $(i,q)$ and $(j,\ell)$, there exist $\sigma'>0$, $X_1'>0$, $X_2'>0$, and $T'>0$ such that
\begin{align*}
&P_{T'}((i,q),\{(i^*,0)\})\ge\sigma' e^{-\nu_{i^*} (T'-X_2')},\\
&P_{T'}((j,\ell),\{(i^*,0)\})\ge\sigma' e^{-\nu_{i^*} (T'-X_2')},
\end{align*}
which verifies \eqref{eq_TV} with $T=T'$ and $\delta=\sigma' e^{-\nu_{i^*} (T'-X_2')}$.

\noindent\underline{Condition (B)}:

Consider the Lyapunov function
\begin{align}
V(i,q)=a^i e^{b|q|},\quad (i,q)\in\I\times\Q,
\label{Eq_V2}
\end{align}
where $a^1,\ldots,a^m$, and $b$ are positive constants.

For each server $k$, by the definition of $\varphi_{kh}^i$ \eqref{eq_varphi}, there necessarily exists $L_k<\infty$ such that, for all $h\neq k$,
\begin{align}
\min\Big\{u_h^i,\phi_h(i,L_k\bfe_k)\Big\}\ge\min\Big\{u_h^i,\varphi_{hk}^i\Big\}-\frac1{2nb\max_{j\in\I}a^j}.
\label{eq_Lk}
\end{align}
Let $L=[L_1,\ldots,L_n]^T$.
We claim that the constants
\begin{subequations}
\begin{align}
&c:=\frac1{2\max_{j\in\I}a^j},\label{Eq_c}\\
&d:=\max_{i\in\I}|\Ell V(i,L)+cV(i,L)|,\label{Eq_d}
\end{align}
\label{Eq_cd}
\end{subequations}
\hspace{-3pt}verify the drift condition \eqref{Eq_Drift}.
Let us prove this claim.

Plugging the Lyapunov function defined in \eqref{Eq_V2} into the expression of the infinitesimal generator \eqref{Eq_Lg}, we obtain
\begin{align}
\Ell V(i,q)&=\Bigg(\sum_{k=1}^n\left(\phi_k(i,q)-r^\phi_k(i,q)\right)a^i b\nonumber\\
&\quad\quad+\sum_{j\in\I}\lambda_{ij}(a^j-a^i)\Bigg){ e^{b|q|}}.
\label{Eq_LV2}
\end{align}
Then, to check \eqref{Eq_Drift}, we need to consider two cases:

\emph{Case I: $q\in\{\zeta\in\Q:0\le \zeta\le L\}$.} Since each such $q$'s are bounded, $V(i,q)$ is also bounded. Hence, we can verify in a rather straightforward manner that, with $c$ and $d$ given by \eqref{Eq_cd}, $\Ell V\le -cV+d$ for all $i\in\I$ and $0\le q\le L$.

\emph{Case II: $q\in\Q\backslash\{\zeta\in\Q:0\le \zeta\le L\}$.} For each such $q$, there necessarily exists a server $k_1$ such that $q_{k_1}>L_{k_1}$.
For the ${k_1}$-th server, since $q_{k_1}>L_{k_1}\ge0$, we have
\begin{align}
r_{k_1}^\phi(i,q)=u_{k_1}^i,\;\forall i\in\I.
\label{eq_rkphi}
\end{align}

For the other servers, i.e. for each $h\neq {k_1}$, we have
\begin{subequations}
\begin{align}
&r^\phi_h(i,q)
\stackrel{\footnotesize\eqref{eq_r}}=\min\Big\{u_h^i,\phi_h(i,q)\Big\}\nonumber\\
&\ge\min\{u_h^i,\phi_h(i,q_{k_1}\bfe_{k_1})\}
\ge\min\{u_h^i,\phi_h(i,L_{k_1}\bfe_{k_1})\}\label{eq_>=min2}\\
&\stackrel{\footnotesize\eqref{eq_Lk}}\ge\min\{u_h^i,\varphi_h^{k_1}(i)\}-\frac1{2nb\max_{j\in\I}a^j},\;\forall i\in\I,
\label{eq_rhphi}%
\end{align}
\end{subequations}
where \eqref{eq_>=min2} results from Assumption~\ref{Asm_phi} (monotonicity).
Combining \eqref{eq_rkphi} and \eqref{eq_rhphi}, we can write
\begin{align}
\sum_{h=1}^nr^\phi_h(i,q)
&\ge u_{k_1}^i+\sum_{\substack{h:h\neq {k_1}}}\min\Big\{u_h^i,\varphi_h^{k_1}(i)\Big\}-\frac1{2b\max_{j\in\I}a^j}\nonumber\\
&\stackrel{\footnotesize\eqref{eq_Rmin}}\ge R_\min^i-\frac1{2b\max_{j\in\I}a^j}.
\label{eq_sumrhphi}
\end{align}
Then,
\begin{align*}
&\sum_{k=1}^n\left(\phi_k(i,q)-r^\phi_k(i,q)\right)a^i b+\sum_{j\in\I}\lambda_{ij}(a^j-a^i)\\
&\quad\stackrel{\footnotesize\eqref{eq_sumrhphi}}{\le}\left(A-R_\min^i+\frac1{2b\max_{j\in\I}a^j}\right)a^i b+\sum_{j\in\I}\lambda_{ij}(a^j-a^i)\\
&\quad\stackrel{\footnotesize\eqref{eq_BMI}}{\le}-1+\frac12=-\frac12.
\end{align*}
Finally,
\begin{align*}
\Ell V(i,q)\stackrel{\footnotesize\eqref{Eq_LV2}}{\le}-\frac12 e^{b|q|}
\stackrel{\footnotesize{\eqref{Eq_c}}}{\le}-ca^i e^{b|q|}\stackrel{\footnotesize\eqref{Eq_V2}}{=}-cV.
\end{align*}
Hence, \eqref{Eq_Drift} holds for all $i\in\I$, all $q\in\Q\backslash\{q:0\le \zeta\le L\}$, and all $d\ge0$.

Thus, we have verified that the drift condition \eqref{Eq_Drift} holds for all $(i,q)\in\I\times\Q$.

Finally, note that we have verified conditions (A) and (B) for the controlled PDQ system. Thus, we obtain from \cite[Theorem 6.1]{meyn93} that the PDQ system is exponentially stable.

\end{proof}

The condition \eqref{eq_phi0<u} states that there exists a mode $i^*$ in which every queue decreases to zero.
Practically, one can interpret $i^*$ as a ``nominal'' or ``normal'' mode in which the saturation rates are sufficiently high and satisfy \eqref{eq_phi0<u}.
This condition leads to Condition (A).

The condition \eqref{eq_BMI} essentially imposes a lower bound on the total discharged flow from the $n$ servers, which is characterized by $R_\min^i$.
This condition leads to Condition (B).
To verify this condition, one needs to determine whether BMI \eqref{eq_BMI} admits positive solutions for $a_1,\ldots,a_m$ and $b$. This can be done using the known computational methods to solve BMIs (see e.g. \cite{vanantwerp00,lofberg04}). 

\begin{rem}
Using the exponential Lyapunov function (31), one can also apply \cite[Theorem 4.3]{meyn93} to obtain that, under (23), for each initial condition $(i,q)\in\I\times\Q$, we have
\begin{align*}
\limsup_{t\to\infty}\frac1t\int_0^t\mathsf E[e^{|Q(s)|}]ds<\infty.
\end{align*}
That is, moments of the queue lengths are bounded.
\end{rem}

Furthermore, if the system has only two modes, solutions for $b$ and $a$ can be constructed in a more straightforward manner, which motivates the next result.
\begin{prp}
A PDQ system of $n$ parallel servers with two modes $\{1,2\}$ and with an admissible control policy $\phi$ is stable if
\begin{align}
\exists i^*\in\{1,2\},\;
	\phi_k(i^*,0)<u_k^{i^*},\;
	k\in\{1,2\},
	\label{eq_phi0<u2}
\end{align}
and if
\begin{align}
A<\sfp_1R_\min^1+\sfp_2R_\min^2,
\label{eq_A<R}
\end{align}
where $R_\min^i$ is defined in \eqref{eq_Rmin}.
\label{prp_bpdq}
\end{prp}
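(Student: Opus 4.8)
The plan is to derive Proposition~\ref{prp_bpdq} from Theorem~\ref{Thm_PDQ2} by showing that, when $m=2$, the two hypotheses \eqref{eq_phi0<u2} and \eqref{eq_A<R} imply the two hypotheses \eqref{eq_phi0<u} and \eqref{eq_BMI}. The first implication is immediate, since \eqref{eq_phi0<u2} is precisely \eqref{eq_phi0<u} specialized to $\I=\{1,2\}$. Thus the whole task reduces to exhibiting, under \eqref{eq_A<R}, a vector $a=[a^1,a^2]^T\in\real_{>0}^2$ and a scalar $b>0$ that satisfy the BMI \eqref{eq_BMI}; this is where the two-mode structure makes an explicit construction possible.

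To set up, I would write $d_i:=A-R_\min^i$, so that $A\bfe-R_\min=[d_1,d_2]^T$. For two modes, $\nu_1=\lambda_{12}$ and $\nu_2=\lambda_{21}$, so $\Lambda=\left[\begin{smallmatrix}-\lambda_{12}&\lambda_{12}\\\lambda_{21}&-\lambda_{21}\end{smallmatrix}\right]$ and the stationary distribution \eqref{Eq_pLmd} is $\sfp_1=\lambda_{21}/(\lambda_{12}+\lambda_{21})$, $\sfp_2=\lambda_{12}/(\lambda_{12}+\lambda_{21})$. Consequently \eqref{eq_A<R} is equivalent to $\sfp_1 d_1+\sfp_2 d_2<0$, i.e. to $\lambda_{21}d_1+\lambda_{12}d_2<0$. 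Setting $M(b):=\diag(A\bfe-R_\min)b+\Lambda$, the BMI \eqref{eq_BMI} asks for a positive $a$ with $M(b)a\le-\bfe$.

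The observation that makes the construction straightforward is that it suffices to find $b>0$ and $a>0$ with $M(b)a<0$ componentwise: replacing $a$ by $\kappa a$ for large $\kappa>0$ then yields $M(b)(\kappa a)=\kappa M(b)a\le-\bfe$. Normalizing $a^1=1$ and writing $t:=a^2>0$, the two strict inequalities $M(b)a<0$ become $\lambda_{12}t<\lambda_{12}-bd_1$ and $\lambda_{21}+(bd_2-\lambda_{21})t<0$, i.e.
\[
\frac{\lambda_{21}}{\lambda_{21}-bd_2}<t<1-\frac{bd_1}{\lambda_{12}}.
\]
I would then verify that this interval is nonempty and lies in $(0,\infty)$ for all sufficiently small $b>0$. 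Expanding the endpoints to first order in $b$ gives lower endpoint $1+bd_2/\lambda_{21}+O(b^2)$ and upper endpoint $1-bd_1/\lambda_{12}$, so, dividing by $b>0$, the nonemptiness condition reduces to $d_2/\lambda_{21}+d_1/\lambda_{12}<0$, equivalently $\lambda_{21}d_1+\lambda_{12}d_2<0$ — which is exactly \eqref{eq_A<R}. Choosing such a $b$, picking any $t$ in the (positive) interval, and scaling as above produces a feasible pair $(a,b)$ for \eqref{eq_BMI}, and Theorem~\ref{Thm_PDQ2} then yields stability, in fact exponential stability.

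The main obstacle, and essentially the only computation, is this final interval analysis: one must confirm both that the upper endpoint stays positive and strictly above the lower endpoint for small $b$, and that nonemptiness is \emph{equivalent} to \eqref{eq_A<R} rather than merely implied by it, taking care that the $O(b^2)$ remainder does not destroy the strict inequality. A clean alternative that sidesteps the remainder bookkeeping is to argue structurally: $M(b)$ is a Metzler matrix (its off-diagonal entries $\lambda_{12},\lambda_{21}$ are nonnegative), its Perron eigenvalue equals $0$ at $b=0$ with right eigenvector $\bfe$ and left eigenvector $\sfp$, and by first-order eigenvalue perturbation its derivative at $b=0$ is $\sfp(A\bfe-R_\min)=\sfp_1 d_1+\sfp_2 d_2<0$ by \eqref{eq_A<R}; hence this eigenvalue becomes negative for small $b>0$, and the associated positive Perron eigenvector $a$ satisfies $M(b)a<0$, from which scaling again gives \eqref{eq_BMI}.
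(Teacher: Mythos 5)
Your proposal is correct, and it reaches the BMI \eqref{eq_BMI} by a genuinely different route than the paper. The paper's proof also reduces to Theorem~\ref{Thm_PDQ2} (noting, as you do, that \eqref{eq_phi0<u2} is \eqref{eq_phi0<u} for $m=2$), but it constructs $(a,b)$ in \emph{closed form}: after ordering the modes via $D_\min=\min\{A-R_\min^1,A-R_\min^2\}$ and $D_\max=\max\{A-R_\min^1,A-R_\min^2\}$, it splits into the cases $D_\max\le0$ and $D_\max>0$, and in the latter solves $\big[\diag(A\bfe-R_\min)b+\Lambda\big]a=-\bfe$ \emph{exactly} by Cramer's rule, with $b=(\lambda_{12}+\lambda_{21})\Dbar/(2D_\min D_\max)$, and then checks positivity of $a$ through sign arguments on the determinant. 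You instead observe that strict componentwise negativity $M(b)a<0$ suffices after rescaling $a$ by a large constant, and establish feasibility for all sufficiently small $b>0$, either by your interval computation or by Perron--Frobenius perturbation of the Metzler matrix $M(b):=\diag(A\bfe-R_\min)b+\Lambda$. Both are valid; the trade-off is this. The paper's explicit constants feed directly into the rate $c=\min_i 1/(2a^i)$ in \eqref{Eq_ect}, so it yields a quantitative convergence-rate estimate, whereas your ``sufficiently small $b$, sufficiently large $\kappa$'' argument does not. In exchange, your argument avoids all case analysis on the signs of $A-R_\min^i$, and your Perron-eigenvector variant is strictly more general: since for any irreducible generator $\Lambda$ the Perron root of $M(b)$ has derivative $\sum_i\sfp_i(A-R_\min^i)$ at $b=0$, the same argument shows that the scalar condition $A<\sum_i\sfp_iR_\min^i$ implies feasibility of the BMI in Theorem~\ref{Thm_PDQ2} for \emph{any} number of modes $m$, not just $m=2$ --- something the paper's two-mode algebra cannot give. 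One small correction: for sufficiency you only need \eqref{eq_A<R} to \emph{imply} nonemptiness of your interval for small $b$; the ``equivalence'' you insist on verifying is not needed for the proposition (it would matter only for a converse statement), so the $O(b^2)$ bookkeeping you flag as the main obstacle is lighter than you suggest.
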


\begin{proof}

First, let us define the following quantities
\begin{subequations}
\begin{align}
&D_\min=\min\left\{A-R_\min^1,A-R_\min^2\right\},\label{Eq_Fmin}\\
&D_\max=\max\left\{A-R_\min^1,A-R_\min^2\right\},\label{Eq_Fmax}\\
&\Dbar=A-(\sfp_1R_\min^1+\sfp_2R_\min^2),\\
&i_{\min}=\left\{\begin{array}{ll}
1, & \mbox{if } D_\min=A-R_\min^1,\\
2, & \mbox{o.w.}
\end{array}\right.\label{eq_imin}\\
&i_{\max}=\left\{\begin{array}{ll}
2, & \mbox{if } D_\min=A-R_\min^1,\\
1, & \mbox{o.w.}
\end{array}\right.\label{eq_imax}\\
&\lambda_{\min}=\left\{\begin{array}{ll}
\lambda_{12}, & \mbox{if } D_\min=A-R_\min^1,\\
\lambda_{21}, & \mbox{o.w.}
\end{array}\right.\label{Eq_lambdatilde}\\
&\lambda_{\max}=\left\{\begin{array}{ll}
\lambda_{21}, & \mbox{if } D_\min=A-R_\min^1,\\
\lambda_{12}, & \mbox{o.w.}
\end{array}\right.\label{Eq_mutilde}
\end{align}
\label{Eq_tildes}
\end{subequations}

Under \eqref{eq_A<R}, we explicitly construct constants $a^{i_\min}$, $a^{i_\max}$, and $b$ satisfying the BMI \eqref{eq_BMI}. 
Condition \eqref{eq_A<R} implies
\begin{align}
A-\sfp_1R_\min^1-\sfp_2R_\min^2
=\sfp_{i_\min}D_\min+\sfp_{i_\max}D_\max<0
\label{Eq_phibarinf-ubar}
\end{align}
Since $D_\min\le D_\max$, \eqref{Eq_phibarinf-ubar} implies that $D_\min<0$.
Thus, we only need to consider two cases:

\emph{In the case that} $D_\min<0,\;D_\max\le0$, we can select an arbitrary $a^{i_\min}>\max_i\{1/\lambda_i\}$ and let
\begin{align}
a^{i_\max}=2a^{i_\min},\;
b=\frac{\lambda_\min a^{i_\min}+1}{-D_\min a^{i_\min}}.
\label{Eq_ab1}
\end{align}
It is not hard to see that $a^{i_\min}$, $a^{i_\max}$, and $b$ are positive and satisfy the BMI \eqref{eq_BMI}.

\emph{In the case that} $D_\min<0$, $D_\max>0$, we let
\begin{subequations}
\begin{align}
&b=\frac{(\lambda_{12}+\lambda_{21}){\Dbar}}{2D_\min D_\max},\label{Eq_b}\\
&a^{i_\min}=\frac{-D_\max b+\lambda_{12}+\lambda_{21}}{\det[\diag(A\bfe-R_\min)b+\Lambda]},\\
&a^{i_\max}=\frac{-D_\min b+\lambda_{12}+\lambda_{21}}{\det[\diag(A\bfe-R_\min)b+\Lambda]}.\label{Eq_a2}
\end{align}
\label{Eq_ab2}
\end{subequations}

Now, we show that these constants are positive.
First, note that \eqref{eq_A<R} implies $\Dbar<0$.
Then, since $D_\min<0$ and $D_\max>0$, and since $\Dbar<0$, $b$ is positive.
Secondly, to see that $a^{i_\min}>0$, note that
\begin{align*}
&\diag(A\bfe-R_\min)b+\Lambda\\
&={\small\left[\begin{array}{cc}
b(A-R_\min^1)-\lambda_{12}&\lambda_{12}\\
\lambda_{21}&b(A-R_\min^2)-\lambda_{21}
\end{array}\right]},
\end{align*}
and
\begin{align*}
&\mathrm{det}[\diag(A\bfe-R_\min)b+\Lambda]\nonumber\\
&=b^2\left(A-R_\min^1\right)\left(A-R_\min^2\right)\\
&\hspace{.5in}-\lambda_{12} b\left(A-R_\min^2\right)-\lambda_{21} b\left(A-R_\min^1\right)\nonumber\\
&=b^2\left(A-R_\min^1\right)\left(A-R_\min^2\right)
-b(\lambda_{12}+\lambda_{21}){\Dbar}\\
&=b^2D_\min D_\max-b(\lambda_{12}+\lambda_{21}){\Dbar}.
\end{align*}
Again, since $D_\min<0$ and $D_\max>0$, one can check that the $b$ given in \eqref{Eq_b} ensures that $\det[\diag(A\bfe-R_\min)b+\Lambda]>0$.
In addition, note that
\begin{align*}
b
&=\frac{(\lambda_{12}+\lambda_{21})\Dbar}{2D_\min D_\max}\\
&=\frac{\lambda_{12}+\lambda_{21}}{D_\max}\left(\frac{-\sfp_{i_\min}D_\min-\sfp_{i_\max}D_\max}{-2D_\min}\right)\\
&<\frac{\lambda_{12}+\lambda_{21}}{D_\max}\left(\frac{-\sfp_{i_\min}D_\min-\sfp_{i_\max}D_\min}{-2D_\min}\right)\\
&=\frac{\lambda_{12}+\lambda_{21}}{2D_\max}
<\frac{\lambda_{12}+\lambda_{21}}{D_\max},
\end{align*}
which, along with $D_\max>0$, implies $a^{i_\min}>0$.
Finally, since $D_\min<0$, $a^{i_\max}$ is also positive. 

From \eqref{eq_imin} and \eqref{eq_imax}, we know that
\begin{align*}
&a^1=\left\{\begin{array}{ll}
a^{i_\min}, & \mbox{if } D_\min=A-R_\min^1,\\
a^{i_\max}, & \mbox{o.w.}
\end{array}\right.\\
&a^2=\left\{\begin{array}{ll}
a^{i_\max}, & \mbox{if } D_\min=A-R_\min^1,\\
a^{i_\min}, & \mbox{o.w.}
\end{array}\right.
\end{align*}
Let $a=[a^1,a^2]^T$.
Then, one can check that $a$ and $b$ satisfy
\begin{align*}
[\diag(A\bfe-R_\min)b+\Lambda]a=-\bfe,
\end{align*}
and thus satisfy the BMI \eqref{eq_BMI}.

In addition, \eqref{eq_phi0<u2} is analogous to \eqref{eq_phi0<u}.
Thus, we can conclude from Theorem~\ref{Thm_PDQ2} that the two-mode PDQ system is stable.

\end{proof}

In comparison to Theorem~\ref{Thm_PDQ2}, Proposition~\ref{prp_bpdq} provides a simpler criterion \eqref{eq_A<R} for stability of PDQ systems with two modes, since it does not involve solving a BMI.

Furthermore, if a PDQ system with two modes is controlled by a mode-responsive routing policy \eqref{eq_phimod}, then we can obtain a \emph{necessary and sufficient condition} for stability:

\begin{prp}
\label{prp_bimod}
A system of $n$ parallel servers two modes $\{1,2\}$ and with a mode-responsive routing policy $\phi$ given by \eqref{eq_phimod} is stable if and only if
\begin{align}
\sfp_1\psi_k^1+\sfp_2\psi_k^2<\sfp_1u_k^1+\sfp_2u_k^2,\;\forall k\in\{1,\ldots,n\}.
\label{eq_phi<u}
\end{align}
\end{prp}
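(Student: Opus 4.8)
The plan is to exploit the special structure of the mode-responsive policy: since $\phi_k^{\mathrm{mod}}(i,q)=\psi_k^i$ does not depend on $q$, the field $D_k^\phi(i,q)=\psi_k^i-r_k^\phi(i,q)$ depends on the state only through $(i,q_k)$. Hence the $n$ queues \emph{decouple}, and each coordinate $(I(t),Q_k(t))$ is by itself a two-mode Markov-modulated scalar fluid queue with drift $c_k^i:=\psi_k^i-u_k^i$ whenever $Q_k>0$ (and drift $(c_k^i)_+\ge0$ at $q_k=0$). I would first record that for this policy $\varphi_{kk}^i=\psi_k^i$, and that \eqref{eq_phi<u} is exactly the statement that every scalar queue has strictly negative mean drift, $\sfp_1c_k^1+\sfp_2c_k^2<0$ for all $k$.

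For necessity I would invoke Theorem~\ref{Thm_PDQ1} with $\varphi_{kk}^i=\psi_k^i$, which already delivers the non-strict inequality $\sfp_1\psi_k^1+\sfp_2\psi_k^2\le\sfp_1u_k^1+\sfp_2u_k^2$ for each $k$; it then remains to exclude equality. If equality held for some $k$, the marginal process $(I,Q_k)$, which is Markov by the decoupling, would be a critically loaded (zero-mean-drift) scalar fluid queue and therefore null recurrent, admitting no invariant probability measure. Since the $(I,Q_k)$-marginal of any joint invariant measure would itself be invariant for $(I,Q_k)$, the joint process could not be stable, and this contradiction forces the strict inequality.

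For sufficiency I would verify conditions (A) and (B) of \cite[Theorem 6.1]{meyn93}, as recalled before Theorem~\ref{Thm_PDQ2}. For the drift condition (B) I would take the separable Lyapunov function $V(i,q)=\sum_{k=1}^n a_k^i e^{b_k q_k}$; by the decoupling $\Ell V$ splits into $n$ scalar contributions, and for each $k$ the scalar matrix inequality $(\diag(c_k^1,c_k^2)b_k+\Lambda)a_k\le-\bfe$ is feasible with positive $a_k^1,a_k^2,b_k$ by the explicit construction in the proof of Proposition~\ref{prp_bpdq} (reading $c_k^i$ in place of $A-R_\min^i$), precisely because \eqref{eq_phi<u} supplies the negative-mean-drift hypothesis that construction needs. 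Summing the per-coordinate bounds yields $\Ell V\le-cV+d$.

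The main obstacle is condition (A), the minorization required for uniqueness. The argument used for Theorem~\ref{Thm_PDQ2} does not carry over, because \eqref{eq_phi<u} does \emph{not} imply the existence of a single mode $i^*$ draining all queues at once (one mode may drain some queues while flooding others), so the common reachable state $(i^*,0)$ exploited there need not exist. Instead I would prove (A) by a synchronous coupling: start two copies from $(i,q)$ and $(j,\ell)$, couple the two mode chains so that they agree from some finite time onward (possible with positive probability since $\{I(t)\}$ is ergodic), and thereafter drive both queue vectors by the common mode trajectory. Since each queue $k$ has at least one draining mode (its mean drift is negative) and two copies of a coordinate decrease at equal rates until the larger reaches zero, a trajectory dwelling long enough first in one mode and then the other brings, coordinate by coordinate, the larger copy to zero; once a coordinate's two copies agree they agree forever, so the full states coalesce. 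As such a trajectory has positive probability over a finite horizon $T$ depending on $(q,\ell)$, the coupled copies coincide at time $T$ with probability at least some $\delta>0$, giving \eqref{eq_TV}. With (A) and (B) established, \cite[Theorem 6.1]{meyn93} yields stability and completes the equivalence.
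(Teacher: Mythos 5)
Your proposal is correct, and its skeleton coincides with the paper's: you decouple the servers (under the mode-responsive policy each pair $(I(t),Q_k(t))$ is Markov on its own), you get sufficiency from exponential Lyapunov functions whose constants come from the two-mode construction in Proposition~\ref{prp_bpdq} with $\psi_k^i-u_k^i$ read in place of $A-R_\min^i$, and you get necessity from Theorem~\ref{Thm_PDQ1} plus a contradiction argument in the equality case. The genuine difference is your treatment of the minorization condition (A). The paper's proof stops at ``consider the $n$ servers independently and adapt Proposition~\ref{prp_bpdq} to conclude that the $k$-th server is stable''; it never addresses why stability of each marginal $(I,Q_k)$ yields total-variation convergence of the \emph{joint} process, and, as you correctly observe, the argument used for Theorem~\ref{Thm_PDQ2} cannot be borrowed here: under \eqref{eq_phi<u} alone there may be no single mode draining all queues at once (two servers whose draining modes are swapped is a valid instance), so the common atom $(i^*,0)$ exploited via \eqref{eq_phi0<u} need not even be reachable, and marginal convergence does not by itself rule out several joint invariant measures sharing the same marginals. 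Your synchronous coupling --- merge the two mode chains, then coalesce the coordinates one at a time, each during a sojourn in its own draining mode, using the fact that two copies of a coordinate driven by common modes keep a constant gap until the larger hits zero and agree forever after --- fills exactly this hole, and together with the separable Lyapunov function $V(i,q)=\sum_k a_k^i e^{b_k q_k}$ (the paper keeps the per-server functions $V_k$ separate) it gives a genuinely joint verification of (A) and (B); in this respect your argument is more complete than the paper's own. One soft spot on your side: in the necessity part you assert that a critically loaded scalar fluid queue is null recurrent and so admits no invariant probability measure; this is true but is left as a citation-level claim (it can be extracted from \cite{kulkarni97}, or proved along the paper's suggested route of examining $\mu_\phi(\I\times\{0\})$ through the stationary flow balance), so it sits at the same level of sketchiness as the paper's own hint for that step.
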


\begin{proof}
Since the system is controlled by a mode-responsive policy, the queues in various servers do not interact.
Therefore, we can consider the $n$ servers independently.
For the $k$-th server, consider the Lyapunov function
\begin{align*}
V_k(i,q_k)=a_k^i\exp(b_kq_k),\;(i,q)\in\{1,2\}\times\real_{\ge0}
\end{align*}
with parameters $[a_k^1,a_k^2]^T\in\real_{>0}^2$ and $b_k>0$.
With this Lyapunov function, one can adapt the proof of Proposition~\ref{prp_bpdq} and conclude that the $k$-th server is stable if \eqref{eq_phi<u} holds.

To obtain the necessity of \eqref{eq_phi<u}, first note that the $k$-th server is unstable if $
\sfp_1\psi_k^1+\sfp_2\psi_k^2>\sfp_1u_k^1+\sfp_2u_k^2.$
Secondly, to argue that the $k$-th server is unstable if
\begin{align}
\sfp_1\psi_k^1+\sfp_2\psi_k^2=\sfp_1u_k^1+\sfp_2u_k^2,
\label{eq_equality}
\end{align}
one can first assume by contradiction the existence of an invariant probability measure $\mu_\phi$, and then consider $\mu_\phi(\I\times\{0\})$ to arrive at a contradiction to \eqref{eq_equality}.
\end{proof}

In addition, for the setting of Proposition \ref{prp_bimod}, expression for the invariant probability measure $\mu_\phi$ has been reported in the literature \cite{kulkarni97}, which makes possible analytical optimization of the routing policy.
\section{Illustrative examples}
\label{Sec_Net}

In this section, we demonstrate how our results can provide insights for traffic flow routing under stochastic capacity fluctuation. 
Consider a network of two parallel servers.
The total inflow is $A=1$.
Our results in Section~\ref{Sec_PDQ} can be applied to obtain stability conditions of this network.
We particularly focus on the practically motivated routing policies given in \eqref{eq_phimod}--\eqref{eq_philog}.

\subsection{A two-mode network}

Suppose that the network has two modes $\{1,2\}$ with symmetric transition rates $\lambda_{12}=\lambda_{21}=1$. Thus, the steady-state probabilities are $\sfp_1=\sfp_2=0.5$. The saturation rates in both modes are given as $u^1=[1.2,0.7]^T$ and $u^2=[0.2,0.7]^T$.
Thus, both servers have an average saturation rate of 0.7.

\subsubsection{Mode-responsive routing}

For this two-mode system, the policy given by \eqref{eq_phimod} can be parametrized by two constants $\psi_1^1,\psi_1^2\in[0,1]$ (note that admissibility requires $\psi_1^i+\psi_2^i=1$ for $i\in\{1,2\}$).
By Proposition~\ref{prp_bimod}, the routing policy $\phi^{\mathrm{mod}}$ is stabilizing if and only if
\begin{align*}
0.3<(\psi_{1}^1+\psi_{1}^2)/2<0.7,\quad\psi_1^1\in[0,1],\psi_1^2\in[0,1].
\end{align*}
That is, the PDQ system is stable if and only if the average inflows into each server are less than their respective average saturation rate (note that $(\psi_{1}^1+\psi_{1}^2)/2>0.3$ is equivalent to $(\psi_{2}^1+\psi_{2}^2)/2<0.7$).

\subsubsection{Piecewise-affine feedback routing}
Consider the policy given by \eqref{eq_phipwa}.
Admissibility requires $\alpha_{11}=\alpha_{12}$, $\alpha_{21}=\alpha_{22}$, and $\theta_1+\theta_2=1$. Hence, we denote $\alpha_1=\alpha_{11}=\alpha_{12}$ and $\alpha_2=\alpha_{21}=\alpha_{22}$.
For $k=1,2$ and $i\in\{1,2\}$, the expression of the limiting inflows \eqref{eq_varphi} are as follows:
\begin{align*}
&\varphi_{kk}^i=\left\{\begin{array}{ll}
0, & \mbox{if }\alpha_{k}>0,\\
\min\{A,\theta_k\}, & \mbox{if }\alpha_{k}=0,
\end{array}\right.\\
&\varphi_{kh}^i=\left\{\begin{array}{ll}
1, & \mbox{if }\alpha_{h}>0,\\
\min\{A,\theta_k\}, & \mbox{if }\alpha_{h}=0,
\end{array}\right.h\neq k.
\end{align*}
Table \ref{tab_pwa} shows the necessary condition for stability given by Theorem~\ref{Thm_PDQ1} and the sufficient condition for stability given by Proposition~\ref{prp_bpdq}.
\begin{table}[h]
\centering
\caption{Stability conditions (two modes, PWA routing).}
\label{tab_pwa}
\begin{tabular}{|c|c|c|c|}
\hline
$\alpha_1$ & $\alpha_2$     & Necessary condition                     & Sufficient condition                \\ \hline
$=0$ & $=0$ & $0.3\le\theta_1\le0.7$       &  $0.3<\theta_1<0.7$     \\ 
$=0$ & $>0$ & $\theta_1\le0.7$    & $0.3<\theta_1<0.7$     \\ 
$>0$ & $=0$ & $\theta_1\ge0.3$    &  $\theta_1>0.3$ \\ 
$>0$ & $>0$ & $\theta_1\in\real$ &  $\theta_1>0.3$ \\ \hline
\end{tabular}
\end{table}
Note that the restriction on $\theta_k$ is stronger if $\alpha_k=0$.
The intuition is that, if the routing policy is not responsive to the queue length in a server, then an appropriate selection of the nominal inflow $\theta_k$ is crucial to ensure stability.
In addition, the structures of the stability conditions strongly depend on whether $\alpha_k$ is zero, but not on the exact magnitude of $\alpha_k$.
In this example, the gap between the necessary condition and the sufficient condition mainly results from the condition \eqref{eq_phi0<u}, which requires $\theta_1>0.3$.

\subsubsection{Logit routing}

Now, consider the policy \eqref{eq_philog}.
For $k=1,2,\;i\in\{1,2\}$, the limiting inflows are
\begin{subequations}
\begin{align}
&\varphi_{kk}^i=\left\{\begin{array}{ll}
0, & \mbox{if }\beta_{k}>0,\\
\frac{A\exp(\gamma_k)}{\sum_{h=1}^2\exp(\gamma_h)}, & \mbox{if }\beta_{k}=0,
\end{array}\right.\label{eq_varphilog1}\\
&\varphi_{kh}^i=\left\{\begin{array}{ll}
A, & \mbox{if }\beta_{h}>0,\\
\frac{A\exp(\gamma_k)}{\sum_{h=1}^2\exp(\gamma_h)}, & \mbox{if }\beta_{h}=0,
\end{array}\right.h\neq k.
\label{eq_varphilog2}
\end{align}
\label{eq_varphilog}%
\end{subequations}
Again, we can obtain a stability conditions from Theorem~\ref{Thm_PDQ1} and Proposition~\ref{prp_bpdq}.
\begin{table}[h]
\centering
\caption{Stability conditions (two modes, logit routing).}
\label{tab_log}
\begin{tabular}{|c|c|c|c|}
\hline
$\beta_1$ & $\beta_2$     & Necessary condition                     & Sufficient condition                \\ \hline
$=0$ & $=0$ & $|\gamma_1-\gamma_2|\le\log(7/3)$       &
\multirow{4}{*}{$|\gamma_1-\gamma_2|<\log(7/3)$}     \\ 
$=0$ & $>0$ & $\gamma_1-\gamma_2\le\log(7/3)$    & \\ 
$>0$ & $=0$ & $\gamma_1-\gamma_2\ge-\log(7/3)$    &\\ 
$>0$ & $>0$ & $\gamma_1\in\real,\gamma_2\in\real$ & \\ \hline
\end{tabular}
\end{table}
Table~\ref{tab_log} implies that the constants $\gamma_k$ have a stronger impact on stability of the PDQ system than the coefficients $\beta_k$ capturing the sensitivity to queue lengths.
Once again, the gap between the necessary condition and the sufficient condition results from \eqref{eq_phi0<u}, which requires $|\gamma_1-\gamma_2|<\log(7/3)$.

\subsection{A three-mode network}
Suppose that the network has three modes $\{1,2,3\}$ with symmetric transition rates $\lambda_{ij}=1$ for all $i,j\in\I$. Thus, the steady-state probabilities are $\sfp_1=\sfp_2=\sfp_3=1/3$. The saturation rates in the three modes are $u^1=[1.2,0.7]^T$, $u^2=[0.7,0.7]^T$, and $u^3=[0.2,0.7]^T$; i.e. the average saturation rates are equal to those in the two-mode case.
The main difference between the analysis in this subsection and that in the previous subsection is that the sufficient conditions for stability below are obtained numerically (in terms of solving the BMI \eqref{eq_BMI}) instead of analytically.

\subsubsection{Mode-responsive routing}

For ease of presentation, we assume that $\psi_k^2=\psi_k^3$ for $k\in\{1,2\}$.
The limiting inflows $\varphi_{kh}^i$ are given by
\begin{align*}
\varphi_{kh}^i=\psi_k^i,\;h\in\{1,2\},k\in\{1,2\},i\in\I.
\end{align*}
Theorem~\ref{Thm_PDQ1} gives a necessary condition for stability:
\begin{align}
0.3\le1/3\psi_1^1+2/3\psi_1^2\le0.7,
\label{eq_mode}
\end{align}
whose complement is the ``Unstable'' region in Figure~\ref{fig_stable}.
Figure~\ref{fig_stable} also shows a ``Stable'' region obtained from Theorem~\ref{Thm_PDQ2}; the BMI \eqref{eq_BMI} is solved using YALMIP \cite{lofberg04}.
In contrast to the two-mode case, there is an ``Unknown'' region between the ``Stable'' and ``Unstable'' regions, due to the gap between the necessary condition (Theorem~\ref{Thm_PDQ1}) and the sufficient condition (Theorem~\ref{Thm_PDQ2}).

\begin{figure}[hbt]
\centering
\includegraphics[width=0.3\textwidth]{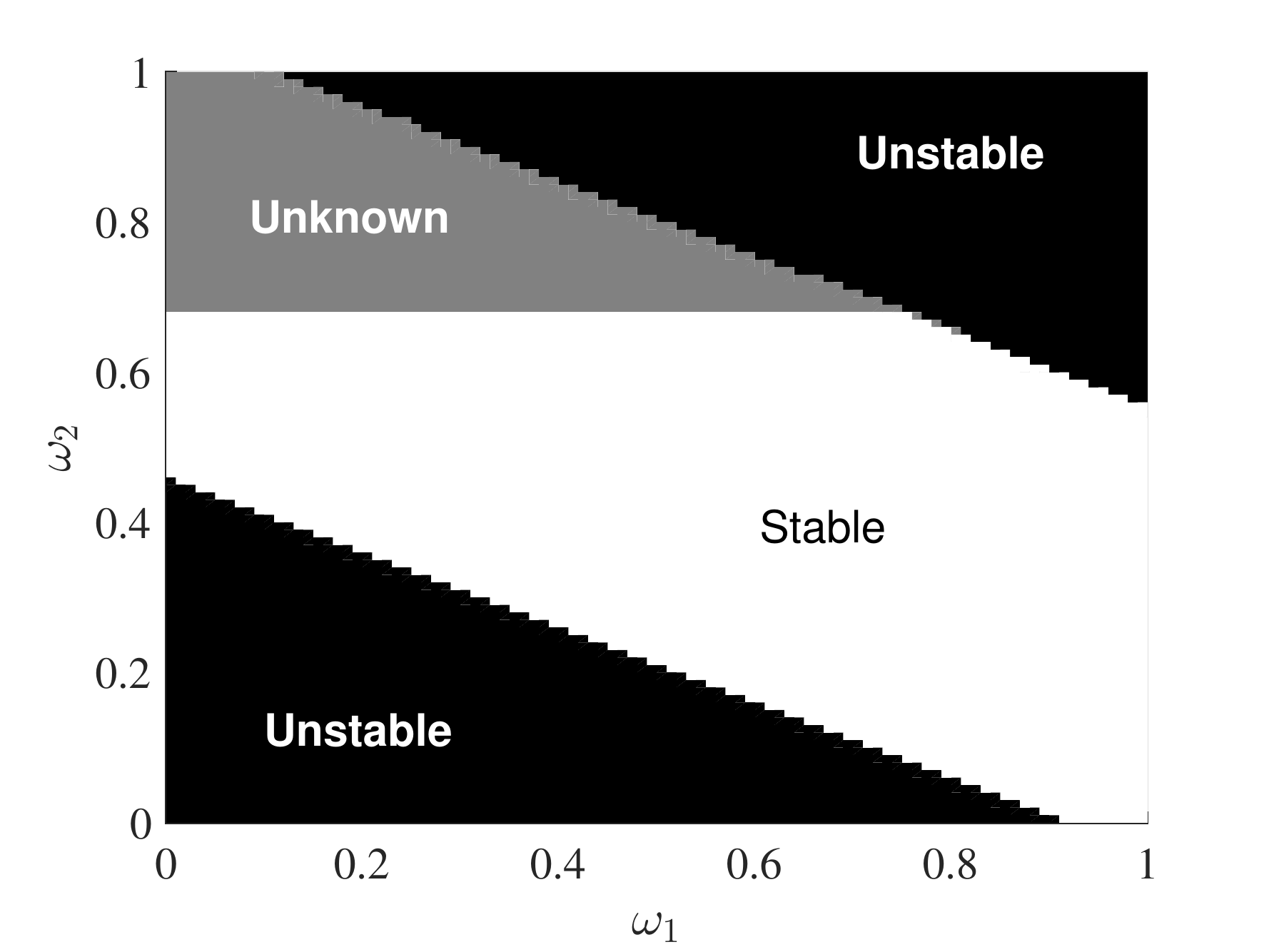}
\caption{Stability of various $(\psi_1^1,\psi_1^2)$ pairs.}
\label{fig_stable}
\end{figure}

\subsubsection{Queue-responsive routing policies}
For the piecewise-affine routing policy \eqref{eq_phipwa} and the logit routing policy \eqref{eq_philog}, 
\begin{table}[h]
\centering
\caption{Stability conditions (three modes, PWA routing).}
\label{tab_pwa2}
\begin{tabular}{|c|c|c|c|}
\hline
$\alpha_1$ & $\alpha_2$     & Necessary condition                     & Sufficient condition                \\ \hline
$=0$ & $=0$ & $0.3\le\theta_1\le0.7$       &  $0.41\le\theta_1\le0.59$     \\ 
$=0$ & $>0$ & $\theta_1\le0.7$    & $0.41\le\theta_1\le0.59$     \\ 
$>0$ & $=0$ & $\theta_1\ge0.3$    &  $\theta_1\ge0.36$ \\ 
$>0$ & $>0$ & $\theta_1\in\real$ &  $\theta_1>0.3$ \\ \hline
\end{tabular}
\end{table}
\begin{table}[h]
\centering
\caption{Stability conditions (three modes, logit routing).}
\label{tab_log2}
\begin{tabular}{|c|c|c|c|}
\hline
$\beta_1$ & $\beta_2$     & Necessary condition                     & Sufficient condition                \\ \hline
$=0$ & $=0$ & $|\gamma_1-\gamma_2|\le\log(7/3)$       &
\multirow{4}{*}{$|\gamma_1-\gamma_2|\le\log1.7$}     \\ 
$=0$ & $>0$ & $\gamma_1-\gamma_2\le\log(7/3)$    & \\ 
$>0$ & $=0$ & $\gamma_1-\gamma_2\ge-\log(7/3)$    &\\ 
$>0$ & $>0$ & $\gamma_1\in\real,\gamma_2\in\real$ & \\ \hline
\end{tabular}
\end{table}
Tables \ref{tab_pwa2} and \ref{tab_log2} show the stability conditions.
In comparison to the two-mode case, the necessary conditions are unchanged, but the sufficient conditions in the three-mode case are more restrictive.
This indicates that the sufficient condition becomes more restrictive as the number of modes (and thus the number of bilinear inequality constraints) increases.
\section*{Acknowledgments}
This work was supported by NSF CNS-1239054 CPS Frontiers, NSF CAREER Award CNS-1453126, and AFRL Lablet-Secure and Resilient Cyber-Physical Systems. 
We are deeply grateful for the insightful comments from the three anonymous reviewers and the associate editor.

\bibliographystyle{ieeetr}
\bibliography{Bibliography}

\begin{thebibliography}{10}

\bibitem{kwon06}
J.~Kwon, M.~Mauch, and P.~Varaiya, ``Components of congestion: Delay from
  incidents, special events, lane closures, weather, potential ramp metering
  gain, and excess demand,'' {\em Transportation Research Record: Journal of
  the Transportation Research Board}, vol.~1959, no.~1, pp.~84--91, 2006.

\bibitem{peterson95}
M.~D. Peterson, D.~J. Bertsimas, and A.~R. Odoni, ``Models and algorithms for
  transient queueing congestion at airports,'' {\em Management Science},
  vol.~41, no.~8, pp.~1279--1295, 1995.

\bibitem{jin17}
L.~Jin and S.~Amin, ``Calibration of a macroscopic traffic flow model with
  stochastic saturation rates,'' in {\em Transportation Research Board 96th
  Annual Meeting}, 2017.

\bibitem{meyn93}
S.~P. Meyn and R.~L. Tweedie, ``{Stability of Markovian processes III:
  Foster-Lyapunov criteria for continuous-time processes},'' {\em Advances in
  Applied Probability}, pp.~518--548, 1993.

\bibitem{cloez15}
B.~Cloez, M.~Hairer, {\em et~al.}, ``Exponential ergodicity for {Markov}
  processes with random switching,'' {\em Bernoulli}, vol.~21, no.~1,
  pp.~505--536, 2015.

\bibitem{davis84}
M.~H.~A. Davis, ``{Piecewise-deterministic Markov processes: A general class of
  non-diffusion stochastic models},'' {\em Journal of the Royal Statistical
  Society. Series B. Methodological}, vol.~46, no.~3, pp.~353--388, 1984.

\bibitem{benaim15}
M.~Bena{\"\i}m, S.~Le~Borgne, F.~Malrieu, and P.-A. Zitt, ``{Qualitative
  properties of certain piecewise deterministic Markov processes},'' in {\em
  Annales de l'Institut Henri Poincar{\'e}, Probabilit{\'e}s et Statistiques},
  vol.~51, pp.~1040--1075, Institut Henri Poincar{\'e}, 2015.

\bibitem{anick82}
D.~Anick, D.~Mitra, and M.~M. Sondhi, ``Stochastic theory of a data-handling
  system with multiple sources,'' {\em The Bell System Technical Journal},
  vol.~61, no.~8, pp.~1871--1894, 1982.

\bibitem{como13i}
G.~Como, K.~Savla, D.~Acemoglu, M.~A. Dahleh, and E.~Frazzoli, ``{Robust
  distributed routing in dynamical networks Part I: Locally responsive policies
  and weak resilience},'' {\em Automatic Control, IEEE Transactions on},
  vol.~58, no.~2, pp.~317--332, 2013.

\bibitem{glockner00}
G.~D. Glockner and G.~L. Nemhauser, ``A dynamic network flow problem with
  uncertain arc capacities: Formulation and problem structure,'' {\em
  Operations Research}, vol.~48, no.~2, pp.~233--242, 2000.

\bibitem{baykal09}
M.~Baykal-G{\"u}rsoy, W.~Xiao, and K.~Ozbay, ``Modeling traffic flow
  interrupted by incidents,'' {\em European Journal of Operational Research},
  vol.~195, no.~1, pp.~127--138, 2009.

\bibitem{jin14}
L.~Jin and S.~Amin, ``{A piecewise-deterministic Markov model of freeway
  accidents},'' in {\em Decision and Control (CDC), 2014 IEEE 53rd Annual
  Conference on}, IEEE, 2014.

\bibitem{newell13}
G.~F. Newell, {\em Applications of Queueing Theory}, vol.~4.
\newblock Springer Science \& Business Media, 2013.

\bibitem{chen92}
H.~Chen and D.~D. Yao, ``A fluid model for systems with random disruptions,''
  {\em Operations Research}, vol.~40, no.~3-supplement-2, pp.~S239--S247, 1992.

\bibitem{kulkarni97}
V.~G. Kulkarni, ``Fluid models for single buffer systems,'' {\em Frontiers in
  queueing: Models and applications in science and engineering}, vol.~321,
  p.~338, 1997.

\bibitem{yu04}
H.~Yu and C.~G. Cassandras, ``Perturbation analysis of feedback-controlled
  stochastic flow systems,'' {\em IEEE Transactions on Automatic Control},
  vol.~49, no.~8, pp.~1317--1332, 2004.

\bibitem{dai95ii}
J.~G. Dai and S.~P. Meyn, ``Stability and convergence of moments for multiclass
  queueing networks via fluid limit models,'' {\em IEEE Transactions on
  Automatic Control}, vol.~40, no.~11, pp.~1889--1904, 1995.

\bibitem{vanantwerp00}
J.~G. VanAntwerp and R.~D. Braatz, ``A tutorial on linear and bilinear matrix
  inequalities,'' {\em Journal of process control}, vol.~10, no.~4,
  pp.~363--385, 2000.

\bibitem{lofberg04}
J.~L{\"o}fberg, ``{YALMIP: A toolbox for modeling and optimization in
  MATLAB},'' in {\em Computer Aided Control Systems Design, 2004 IEEE
  International Symposium on}, pp.~284--289, IEEE, 2004.

\end{thebibliography}
\end{document}